\theoremstyle{definition}
\newtheorem{defin}{Definition}[section]
\theoremstyle{plain}
\newtheorem{thm}[defin]{Theorem}
\newtheorem{prop}[defin]{Proposition}
\theoremstyle{remark}
\newtheorem{ex}[defin]{Example}
\newtheorem{remark}[defin]{Remark}
\newcounter{num}
\title{Homogeneous edge-colorings of graphs}
\author{Paola Bonacini}
\email{bonacini@dmi.unict.it}
\author{Maria Grazia Cinquegrani}
\email{cinquegrani@dmi.unict.it}
\author{Lucia Marino}
\email{lmarino@dmi.unict.it}
\address{Università degli Studi di Catania\\
  Viale A. Doria 6\
95125 Catania\\
Italy}
\begin{document}
\maketitle
\begin{abstract}
	Let $G=(V,E)$ be a multigraph without loops and for any $x\in V$ let $E(x)$ be the set of edges of $G$ incident to $x$. A \emph{homogeneous edge-coloring of $G$} is an
	  assignment of an integer  $m\ge 2$ and a coloring $c\colon E\rightarrow S$ of the edges of $G$ such that $|S|=m$ and for any $x\in V$, if $|E(x)|=mq_{x}+r_{x}$ with $0\le r_{x}<m$, there exists a partition of $E(x)$ in $r_{x}$ color classes of cardinality $q_{x}+1$  and other $m-r_{x}$ color classes of cardinality $q_{x}$. The \emph{homogeneous chromatic index} $\widetilde{\chi}(G)$ is the least $m$ for which there exists such a coloring. We determine $\widetilde{\chi}(G)$ in the case that  $G$ is  a complete multigraph, a tree or a complete bipartite multigraph.
\end{abstract}

\section{Introduction}

Let $G=(V,E)$ be a multigraph without loops (see \cite{B,Vo} as a reference). The usual definition of  coloring of the edges of a multigraph is a mapping from the set of edges $E$ into a finite set of colors such that two adjacent edges have different colors. The chromatic index $\chi'(G)$ is the minimum number of colors for which there exists such a coloring for $G$ (see \cite{V1,V2,V3}).

In \cite{GMV} Gionfriddo, Milazzo and Voloshin define a coloring of the edges of a multigraph by a mapping between $E$ and a set of colors  for which each non-pendant vertex, i.e. of degree at least $2$, is incident to at least two edges of the same color. In their paper they give the definition of upper chromatic index, $\overline{\chi}'(G)$, which is the maximum $k$ for which there exists an edge coloring with $k$ colors, and they determine $\overline{\chi}'(G)$ that for an arbitrary multigraph $G=(V,E)$. The study of such a coloring is related to the coloring theory of mixed hypergraphs (see \cite[Problem 13]{Vo2}).

In \cite{GAR} Gionfriddo, Amato and Ragusa proceed in the way of studying edge colorings of a multigraph in which each non-pendant vertex is incident to at least two edges of the same color. In particular, they give the definition of equipartite edge coloring of a multigraph $G$, where, fixed an integer $h$, they search for the maximum number of colors for which for any $x\in V$ there exists a partition of $E(x)$ in color classes of the same cardinality $h$ with the exception of one of smaller cardinality.

In this paper, proceeding in this direction, we give the definition of \emph{homogeneous edge-coloring} of a multigraph as an
assignment of an integer $m\ge 2$ and a coloring such that for any $x\in V$ $E(x)$ has a partition in $m$ classes of colors  whose cardinality differs for at most $1$, in the case that $|E(x)|\ge m$, and all the edges of $E(x)$ has different colors, in the case that $|E(x)|<m$. In particular, we search for the \emph{homogeneous chromatic index}, which is the minimum number of colors $\widetilde{\chi}(G)$ for which there exists such a  coloring and we prove that, if $G$ is  either a complete multigraph, a tree or a complete bipartite multigraph, then either $\widetilde{\chi}(G)=2$ or $\widetilde{\chi}(G)=3$.

\section*{Acknowledgement} 

We wish to express our deepest gratitude and appreciation to Professor Mario Gionfriddo, for introducing us to this problem, for his support, encouragement and for the many stimulating conversations and discussions.

\section{Homogeneous edge-coloring}

\begin{defin}
  \label{D:2}
  A \emph{homogeneous edge-coloring of $G$} (or
  $m$-homogeneous edge-coloring  of $G$) is an
  assignment of an integer
  $m\ge 2$ and a coloring $c\colon E\rightarrow S$ of the edges of $G$ such that $|S|=m$ and for any $x\in V$, if $|E(x)|=mq_{x}+r_{x}$ with $0\le r_{x}<m$, there exists a partition of $E(x)$ in $r_{x}$ color classes of cardinality $q_{x}+1$  and other $m-r_{x}$ color classes of cardinality $q_{x}$.
\end{defin}

\begin{remark} 
		If $|E(x)|<m$, then $q_{x}=0$ and $r_{x}=|E(x)|$: in this case, the previous definition implies that any two edges of $E(x)$ must be colored with different colors. 
\end{remark}

 Given $x\in V$ and $k\in \{1,\dots,m\}$ it may happen that $c(\sigma)\ne k$ for any $\sigma \in E(x)$. However, if $i,j\in c(E(x))$, with $i\ne j$, then the number of edges of $E(x)$ colored with $i$ and the number of edges of $E(x)$ colored with $j$ either are equal or differ by $1$.

\begin{defin}
  \label{D:3}
  Let $G=(V,E)$ be a graph. The \emph{homogeneous chromatic index}
  $\widetilde{\chi}(G)$ is the   minimum integer $m$  such that $G$ admits a
  $m$-homogeneous edge-coloring. 
\end{defin}

\begin{remark}
	It is useful to underline the following facts.
	\begin{enumerate}
		\item An edge-coloring of a graph $G$ is a homogeneous edge-coloring. In particular, $\widetilde{\chi}(G)\le \chi'(G)$, where $\chi'(G)$ is the chromatic index of $G$.
		\item A path $P_{n}$, with $n\ge 2$, is the graph with vertices $\{x_{1},\dots,x_{n}\}$ and edges $\{x_{i},x_{i+1}\}$ for $i=1,\dots,n-1$.
		It is easy to see that $\widetilde{\chi}(P_{n})=2=\chi'(P_{n})$.
		\item A cycle $C_{n}$, with $n\ge 3$, is the graph with vertices $\{x_{1},\dots,x_{n}\}$ and edges $\{x_{i},x_{i+1}\}$ for $i=1,\dots,n-1$ and $\{x_{n},x_{1}\}$. If $n$ is even, then $\widetilde{\chi}(C_{n})=2=\chi'(C_{n})$; if $n$ is odd, then $\widetilde{\chi}(C_{n})=3=\chi'(C_{n})$. 
		\item A star $S_{n}$ is the graph with vertices $\{x_{0},\dots,x_{n}\}$ and edges $\{x_{0},x_{i}\}$ for $i=1,\dots,n$. It is easy to see that $\widetilde{\chi}(S_{n})=2$. However, in this case $\widetilde{\chi}(S_{n})<\chi'(S_{n})=n$.
		\item A wheel $W_{n}$ is the graph with vertices $\{x_{1},\dots,x_{n}\}$, with $n\ge 4$, such that the subgraph induced by $\{x_{2},\dots,x_{n}\}$ is the cycle $C_{n-1}$ and $x_{1}$ is adjacent to the other vertices $\{x_{2},\dots,x_{n}\}$. If $S=\{1,2\}$, then the following mapping $c\colon E\rightarrow S$ is a $2$-homogeneous coloring of $W_{n}$:
		\begin{itemize}
			\item if $i\in \{2,\dots,n\}$, we define:
			\[
				c(\{x_1,x_i\})=
				\begin{cases}
					1 &\text{ if $i$ is even}\\
					2 &\text{ if $i$ is odd},				
				\end{cases}
			\]
			\item if $i\in \{1,\dots,n-1\}$, we define:
			\[
				c(\{x_i,x_{i+1}\})=
				\begin{cases}
					1 &\text{ if $i$ is odd}\\
					2 &\text{ if $i$ is even},				
				\end{cases}
			\]
			\item and
			\[
			c(\{x_{n},x_{2}\})=
			\begin{cases}
				1 &\text{ if $n$ is even}\\
				2 &\text{ if $n$ is odd}.				
			\end{cases}
			\]
		\end{itemize}
		This means that $\widetilde{\chi}(W_{n})=2$. In this case $\widetilde{\chi}(W_{n})<\chi'(W_{n})=n-1$ for $n\ge 4$.
	\end{enumerate}
\end{remark}

\begin{ex}
	As we have just seen the wheel $W_{5}$ admits this $2$-homogeneous edge-coloring: 
	
	\begin{figure}[H]
	\begin{center}
		\begin{tikzpicture}[scale=0.8,font=\footnotesize,>=stealth',shorten >=1pt,node distance=1.3cm]
			\node[shape=circle,inner sep=2pt,draw,thick] (1) {$x_{1}$};
			\node[shape=circle,inner sep=2pt,draw,thick] (2) [left of=1, above of=1] {$x_{2}$};
			\node[shape=circle,inner sep=2pt,draw,thick] (3) [right of=1, above of=1]{$x_{3}$};
			\node[shape=circle,inner sep=2pt,draw,thick](4) [right of=1, below of=1] {$x_{4}$};
			\node[shape=circle,inner sep=2pt,draw,thick] (5) [left of=1, below of=1] {$x_{5}$};
			\path (1) edge node[fill=white]{$1$}(2)
					  edge node[fill=white]{$2$}(3)
					  edge node[fill=white]{$1$}(4)
					  edge node[fill=white]{$2$}(5);
			\path (2) edge node[fill=white]{$2$}(3)
					  edge node[fill=white]{$1$}(5);
			\path (3) edge node[fill=white]{$1$}(4);
			\path (4) edge node[fill=white]{$2$}(5);
		\end{tikzpicture}
	\end{center}
	\end{figure}
So $\widetilde{\chi}(W_{5})=2$, but $W_{5}$ has no $3$-homogeneous edge-coloring. In fact, the following are, up to permutation, the only possible colorings of $x_{1}$.

\begin{figure}[H]
\begin{center}
	\subfloat{
	\begin{tikzpicture}[scale=0.8,font=\footnotesize,>=stealth',shorten >=1pt,node distance=1.3cm]
		\node[shape=circle,inner sep=2pt,draw,thick] (1) {$x_{1}$};
		\node[shape=circle,inner sep=2pt,draw,thick] (2) [left of=1, above of=1] {$x_{2}$};
		\node[shape=circle,inner sep=2pt,draw,thick] (3) [right of=1, above of=1]{$x_{3}$};
		\node[shape=circle,inner sep=2pt,draw,thick](4) [right of=1, below of=1] {$x_{4}$};
		\node[shape=circle,inner sep=2pt,draw,thick] (5) [left of=1, below of=1] {$x_{5}$};
		\path (1) edge node[fill=white]{$1$}(2)
				  edge node[fill=white]{$1$}(3)
				  edge node[fill=white]{$3$}(4)
				  edge node[fill=white]{$2$}(5);
		\path (2) edge (3)
				  edge (5);
		\path (3) edge (4);
		\path (4) edge (5);
	\end{tikzpicture}}
\hspace{2cm}
\subfloat{
\begin{tikzpicture}[scale=0.8,font=\footnotesize,>=stealth',shorten >=1pt,node distance=1.3cm]
	\node[shape=circle,inner sep=2pt,draw,thick] (1) {$x_{1}$};
	\node[shape=circle,inner sep=2pt,draw,thick] (2) [left of=1, above of=1] {$x_{2}$};
	\node[shape=circle,inner sep=2pt,draw,thick] (3) [right of=1, above of=1]{$x_{3}$};
	\node[shape=circle,inner sep=2pt,draw,thick](4) [right of=1, below of=1] {$x_{4}$};
	\node[shape=circle,inner sep=2pt,draw,thick] (5) [left of=1, below of=1] {$x_{5}$};
	\path (1) edge node[fill=white]{$1$}(2)
			  edge node[fill=white]{$3$}(3)
			  edge node[fill=white]{$1$}(4)
			  edge node[fill=white]{$2$}(5);
	\path (2) edge (3)
			  edge (5);
	\path (3) edge (4);
	\path (4) edge (5);			
\end{tikzpicture}}
\end{center}
\end{figure}
The other vertices $x_{2}$, $x_{3}$, $x_{4}$, $x_{5}$ have degree $3$ and so in a $3$-homogeneous edge-coloring the three corresponding edges have three different colors. However this does not happen in any of the previous cases. 
\end{ex}

\begin{thm}
	If $G$ is an eulerian graph, then $G$ admits a $\tfrac{\Delta(G)}{2}$-homogeneous edge-coloring. 
\end{thm}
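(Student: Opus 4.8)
The plan is to restate the conclusion as an \emph{equitable} $m$-edge-coloring and then obtain it by embedding $G$ into a regular multigraph and applying Petersen's $2$-factorization theorem. Throughout set $m=\Delta(G)/2$, and note that, $G$ being eulerian (connected with all degrees even), every degree $d_x$ is even, so $k_x:=d_x/2\le m$ is an integer. For a vertex with $|E(x)|=d_x=mq_x+r_x$ the required partition means exactly that each of the $m$ colors occurs $\lfloor d_x/m\rfloor$ or $\lceil d_x/m\rceil$ times at $x$; since $d_x\le 2m$ this amounts to saying: no color occurs more than twice at any vertex, every color occurs at least once at each vertex of degree $\ge m$, and every color occurs exactly twice at each vertex of maximum degree $2m$. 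Equivalently, I must split $E$ into $m$ subgraphs of maximum degree $\le 2$ whose degrees at each vertex are as equal as possible.

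First I would regularize $G$ without increasing $\Delta$. Take two disjoint copies $G'$ and $G''$ of $G$ and, for every vertex $x$, join $x'$ to $x''$ by a bundle $B_x$ of $2m-d_x$ parallel edges; this number is even and nonnegative. The resulting multigraph $H$ is $2m$-regular, hence each of its components is eulerian, and Petersen's theorem decomposes $H$ into $m$ edge-disjoint $2$-factors $F_1,\dots,F_m$. Color the edges of $H$ by letting color $i$ be $F_i$, and restrict this coloring to the copy $G'\cong G$. Because each $F_i$ is $2$-regular, every color meets every vertex of $H$ exactly twice; so at a vertex $x'$ the number of \emph{original} edges of color $i$ equals $2$ minus the number of edges of $B_x$ receiving color $i$. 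At a vertex of maximum degree we have $B_x=\varnothing$, so each color automatically occurs exactly twice there; at every other vertex the restricted coloring is equitable \emph{precisely when the bundle $B_x$ is shared out among the $m$ colors as evenly as possible}, i.e.\ each color takes $\lfloor(2m-d_x)/m\rfloor$ or $\lceil(2m-d_x)/m\rceil$ of its edges (an elementary identity shows this is equivalent to the balance of the original edges).

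Hence the whole argument reduces to producing a $2$-factorization of $H$ that is balanced on every bundle, and this is the step I expect to be the real obstacle. Petersen's decomposition is not unique, and its usual construction --- orient an Euler tour of each component so that $\mathrm{in}=\mathrm{out}=m$ at each vertex, pass to the $m$-regular bipartite graph on $V^+\cup V^-$, and split it into $m$ perfect matchings by König's edge-coloring theorem --- does not by itself control how a bundle is distributed, since the two ends $x'^+$ and $x'^-$ are colored independently. My plan is to gain this control by first fixing a balanced assignment of colors to the parallel bundle edges and then extending it: whenever some bundle is unbalanced, with color $i$ on two of its edges at $x'$ while color $j$ is absent there, I would recolor along an $ij$-alternating (Kempe) chain of the matching coloring through that bundle edge. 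Such a swap keeps the coloring proper, hence keeps each color class $2$-regular, moves one bundle edge from $i$ to $j$, and can be shown not to create a new excess elsewhere; iterating drives every bundle to the balanced distribution above. Completing and bookkeeping this exchange argument --- equivalently, invoking an equitable edge-coloring theorem in the special regime $k=\Delta/2$ --- is the heart of the proof, whereas the reduction itself is routine.

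I would finally record where the hypotheses enter: evenness of all degrees guarantees that the bundles have even size and that $k_x=d_x/2$ is an integer, while \emph{connectedness} is essential --- for a disconnected even graph such as $C_5\sqcup K_5$ the odd cycle forces three colors, so no $\tfrac{\Delta}{2}$-homogeneous coloring exists. In particular the statement is to be read for connected eulerian $G$ with $\Delta(G)\ge 4$, so that $m\ge 2$ and such a coloring is defined.
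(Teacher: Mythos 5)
There is a genuine gap here, and you have in effect named it yourself. Your reduction is fine: building the double cover $H$ of $G$ with bundles $B_x$ of $2m-d_x$ parallel edges (where $m=\tfrac{\Delta(G)}{2}$), noting $H$ is $2m$-regular, and applying Petersen to get $2$-factors $F_1,\dots,F_m$ is routine, and restricting to $G$ does translate the homogeneity condition into the requirement that each bundle be shared among the colors as evenly as possible. But the only nontrivial content of the theorem is then the existence of a bundle-balanced $2$-factorization, and this you do not prove. The Kempe-chain repair does not work as stated: the $F_i$ are $2$-factors, not matchings, so the exchange must be performed in the bipartite lift coming from an eulerian orientation, where $M_i\cup M_j$ is a disjoint union of even cycles and a swap recolors an \emph{entire} alternating cycle --- a cycle which may pass through arbitrarily many other bundles and unbalance them. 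The assertion that the swap ``can be shown not to create a new excess elsewhere'' is exactly the claim that needs a potential-function or termination argument, and none is given. Nor does the fallback of ``invoking an equitable edge-coloring theorem in the special regime $k=\Delta/2$'' point at an actual theorem: the standard Hilton--de Werra sufficient condition requires $k\nmid d(v)$ for every vertex, which fails here at every vertex of degree $2m$ (and $m$), while de Werra's balanced colorings of bipartite multigraphs equalize the colors at $x^{+}$ and $x^{-}$ \emph{separately}, without the coordination between in- and out-edges at the same original vertex that your bundle balance requires. (A small further slip: your verbal gloss of the condition omits that at a vertex of degree $d_x<m$ no color may repeat at all, though your displayed $\lfloor d_x/m\rfloor$/$\lceil d_x/m\rceil$ formulation does cover this.)

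For comparison, the paper does not argue along these lines at all: its entire proof is a citation of the equipartite edge-colouring theorem of Gionfriddo, Amato and Ragusa \cite[Theorem 2.1]{GAR} for eulerian multigraphs, whose construction works with an Euler tour of $G$ directly and delivers the required local distribution of colors without any regularization or factorization of an auxiliary graph. Your closing remark that connectedness is essential --- e.g.\ $C_{5}\sqcup K_{5}$ has all degrees even but admits no $\tfrac{\Delta}{2}$-homogeneous coloring --- is correct and consistent with that route, since the Euler tour is precisely where connectedness enters; but as it stands your proposal is a plausible program with its central step open, not a proof.
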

\begin{proof}
	This follows immediately by \cite[Theorem 2.1]{GAR}.
\end{proof}

\section{Complete graphs}

\begin{thm}
  \label{T:1}
  Let $n\ge 4$ be an even integer. Then $\widetilde{\chi}(K_n)=2$.
\end{thm}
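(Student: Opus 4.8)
The plan is to reduce the statement to the existence of a $1$-factorization of $K_n$. First I would record the local constraint that a $2$-homogeneous edge-coloring imposes. Since $n$ is even, every vertex $x$ of $K_n$ has $|E(x)|=n-1$, which is odd; writing $n-1=2q_x+r_x$ with $0\le r_x<2$ forces $q_x=(n-2)/2$ and $r_x=1$. Hence a $2$-homogeneous coloring is exactly a partition of $E$ into two color classes such that at each vertex one color is used $q_x+1=n/2$ times and the other $q_x=(n-2)/2$ times; equivalently, the two color-degrees at every vertex differ by exactly $1$.

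Next I would invoke the classical fact that for even $n$ the complete graph $K_n$ admits a $1$-factorization, i.e. $E$ decomposes into $n-1$ perfect matchings $M_1,\dots,M_{n-1}$, with each vertex meeting exactly one edge of each $M_i$. (If one prefers an explicit construction, one can label the vertices by $\mathbb{Z}_{n-1}\cup\{\infty\}$ and take the standard round-robin matchings.) I would then assign color $1$ to the edges of $M_1,\dots,M_{n/2}$ and color $2$ to the edges of $M_{n/2+1},\dots,M_{n-1}$; note that both $n/2$ and $(n-2)/2$ are integers precisely because $n$ is even, so this split is well defined.

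With this coloring, each vertex lies in exactly $n/2$ edges colored $1$ and exactly $(n-1)-n/2=(n-2)/2$ edges colored $2$. These counts differ by $1$, so they realize the required partition of $E(x)$ into one class of cardinality $q_x+1=n/2$ and one class of cardinality $q_x=(n-2)/2$; thus the coloring is $2$-homogeneous. Since the definition forces $m\ge 2$, no homogeneous coloring with fewer colors exists, and therefore $\widetilde{\chi}(K_n)=2$.

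As for difficulty, there is no serious obstacle once the local condition is translated into ``the color-degrees differ by $1$ at each vertex'': the only external input is the existence of the $1$-factorization of $K_n$ for even $n$. The one point to be careful about is verifying that $r_x=1$ at every vertex, so that a difference of exactly $1$ (and not merely at most $1$) is both what the definition demands and what the construction delivers; this is immediate from the parity of $n-1$.
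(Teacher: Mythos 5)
Your proof is correct, but it takes a different route from the paper's. You reduce the statement to the classical existence of a $1$-factorization of $K_n$ for even $n$ and then split the $n-1$ perfect matchings into a group of $n/2$ (colored $1$) and a group of $(n-2)/2$ (colored $2$); since each vertex meets every matching exactly once, the color-degrees at every vertex are $n/2$ and $(n-2)/2$, as required. The paper instead gives a completely explicit, self-contained coloring: with $V=\{x_1,\dots,x_n\}$, the edge $\{x_i,x_j\}$ gets color $1$ if $i+j$ is odd and color $2$ if $i+j$ is even, so that each vertex sees $n/2$ edges of color $1$ (the opposite-parity neighbours) and $(n-2)/2$ of color $2$. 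Note that the paper's color classes are not unions of perfect matchings (class $1$ is a copy of $K_{n/2,n/2}$ and class $2$ is two disjoint copies of $K_{n/2}$), so the two constructions are genuinely distinct. Your approach buys generality --- the same matching-splitting idea immediately yields homogeneous colorings with other color counts and adapts to any graph with a $1$-factorization --- at the cost of importing an external classical theorem; the paper's approach is more elementary and verification is a one-line parity count. Your observation that $r_x=1$ forces the two color-degrees to differ by exactly $1$ is a correct and worthwhile clarification that the paper leaves implicit, and your remark that $m\ge 2$ by definition settles the lower bound in both arguments.
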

\begin{proof}
Let $S=\{1,2\}$ be a set of colors and $n=2k$. Any vertex $x\in K_n$ has degree $n-1=2k-1$. So we will show
that, for any $x\in V$, we color $k$ edges of $E(x)$ with $1$ and
the remaining $k-1$ edges of $E(x)$ with $2$. Indeed, let
$V=\{x_1,\dots,x_n\}$ and let us consider a mapping $c\colon E\rightarrow S$ defined in the following way:
\[
	c(\{x_i,x_j\})=
	\begin{cases}
		1 & \text{ if $i+j$ is odd}\\
		2 & \text{ if $i+j$ is even,} 
	\end{cases}
\]
for any $i,j\in \{1,\dots,n\}$, with $i\ne j$. So, for any fixed $i$, we see that $k$ edges in $E(x_i)$ are colored with $1$ and $k-1$ edges of $E(x_j)$ are colored with $2$. This proves the statement.
\end{proof}

\begin{thm}
  \label{T:2}
  Let $n\ge 5$ be an odd integer such that $n\equiv 1\mod 4$. Then $\widetilde{\chi}(K_n)=2$.
\end{thm}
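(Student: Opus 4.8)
The plan is to exhibit an explicit $2$-homogeneous edge-coloring of $K_n$, since $\widetilde{\chi}(G)\ge 2$ holds automatically by Definition~\ref{D:2}, so producing such a coloring immediately gives $\widetilde{\chi}(K_n)=2$. The first observation is to read off what a $2$-homogeneous coloring must look like. Every vertex of $K_n$ has degree $n-1$, and since $n$ is odd this degree is even: writing $n-1=2k$ with $k=(n-1)/2$, the partition in Definition~\ref{D:2} with $m=2$ forces $q_x=k$ and $r_x=0$ at every vertex $x$. Hence a $2$-homogeneous coloring is exactly a partition of $E(K_n)$ into two spanning subgraphs, each $k$-regular, i.e. meeting every vertex in precisely $k$ edges.

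To construct two such $k$-regular subgraphs I would identify $V$ with $\mathbb{Z}_n=\{0,1,\dots,n-1\}$ and exploit the circulant structure. For each difference $d\in\{1,\dots,k\}$ let $C_d$ be the set of edges $\{a,a+d\}$ with indices taken mod $n$. Because $n$ is odd, $2d\not\equiv 0\pmod n$ for $1\le d\le k$, so the $n$ edges of $C_d$ are distinct, and each vertex $a$ lies in exactly the two edges $\{a,a+d\}$ and $\{a-d,a\}$; thus every $C_d$ is $2$-regular. A count gives $\sum_{d=1}^{k}|C_d|=kn=\binom{n}{2}$, so the classes $C_1,\dots,C_k$ partition $E(K_n)$ into $k$ pairwise edge-disjoint $2$-regular subgraphs. (Equivalently one could invoke Walecki's decomposition of $K_n$ into $k$ Hamiltonian cycles, but the difference classes suffice and are entirely explicit.)

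The hypothesis $n\equiv 1\pmod 4$ now enters in exactly one place: it forces $k=(n-1)/2$ to be even. I can therefore split the index set into the two halves $\{1,\dots,k/2\}$ and $\{k/2+1,\dots,k\}$ and define $c\colon E\to\{1,2\}$ by giving color $1$ to every edge of $C_1\cup\dots\cup C_{k/2}$ and color $2$ to every edge of $C_{k/2+1}\cup\dots\cup C_k$. Each color class is then a disjoint union of $k/2$ copies of a $2$-regular graph, hence $k$-regular, so every vertex is incident to exactly $k$ edges of color $1$ and $k$ edges of color $2$, which is precisely the requirement identified in the first paragraph.

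The only genuine subtlety — and the reason the statement is restricted to $n\equiv 1\pmod 4$ — is the parity of $k$. The naive parity coloring $c(\{x_i,x_j\})=1$ iff $i+j$ is odd that worked in Theorem~\ref{T:1} fails for odd $n$, as one checks it produces vertices carrying $k+1$ edges of one color and $k-1$ of the other, differing by $2$. The divisibility $4\mid n-1$ is exactly the condition under which $E(K_n)$ splits into two equinumerous families of $2$-regular pieces, and the circulant decomposition realizes this split concretely; past that, the remaining verification is a routine degree count.
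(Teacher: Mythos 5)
Your proof is correct and essentially the same as the paper's: the paper decomposes $K_n$ into $\tfrac{n-1}{2}$ two-regular spanning subgraphs (Hamiltonian cycles, via Alspach--Gavlas) and, using $4\mid n-1$, gives half of them each color, which is exactly your even split of $2$-regular pieces. In fact your difference-class coloring coincides with the second, explicit coloring the paper records (color $1$ for differences $\pm 1,\dots,\pm h$ and color $2$ otherwise, with $n=4h+1$).
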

\begin{proof}
Let $S=\{1,2\}$ be a set of colors. Any vertex $x$ of the graph $K_n$ has even degree $n-1$. We want to show that we can color $\tfrac{n-1}{2}$ of these edges with $1$ and
$\tfrac{n-1}{2}$ of these edges with $2$. By \cite[Theorem 1.2]{AG} we
see that there exist $\tfrac{n-1}{2}$ cycles of length $n$ that
decompose $K_n$. Since $\tfrac{n-1}{2}$ is even, we can color all the
edges of $\tfrac{n-1}{4}$ cycles with $1$ and all the
edges of $\tfrac{n-1}{4}$ cycles with $2$. This proves the statement.

Let $n=4h+1$ for some $h\in \mathbb N$. Another possible coloring is the following mapping $c\colon E\rightarrow S$:
\[
	c(\{x_{i},x_{j}\})=
	\begin{cases}
		1 &\text{ if }j\equiv i-1,\dots,i-h,i+1,\dots,i+h\mod n\\
		2 &\text{ otherwise,}
	\end{cases}
\]
for any $i,j\in \{1,\dots,n\}$, with $i\ne j$.
\end{proof}

\begin{prop}
  \label{P:1}
  Let $n\in \mathbb N$ be an odd integer and let $S=\{1,2\}$ be a set of colors. Then in any coloring $c$ of the edges of a cycle $C_n$ with $S$ there are
  precisely an odd number of vertices whose adjacent edges have the
  same color.  
\end{prop}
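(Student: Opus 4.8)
The plan is to count the \emph{bichromatic} vertices of $C_n$, meaning those vertices whose two incident edges receive different colors, and to show that their number is necessarily even. Since $C_n$ has exactly $n$ vertices and $n$ is odd, the number of \emph{monochromatic} vertices (those whose two incident edges share a color) is then $n$ minus an even number, hence odd, which is exactly the claim.

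First I would fix notation by labelling the edges of $C_n$ cyclically as $e_1,\dots,e_n$, where $e_i=\{x_i,x_{i+1}\}$ with all indices read modulo $n$. With this convention the vertex $x_i$ is incident to precisely the two consecutive edges $e_{i-1}$ and $e_i$, so that $x_i$ is monochromatic exactly when $c(e_{i-1})=c(e_i)$ and bichromatic exactly when $c(e_{i-1})\ne c(e_i)$.

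The key step is a short parity argument. Identifying the color set $S=\{1,2\}$ with $\mathbb Z/2\mathbb Z$, the vertex $x_i$ is bichromatic if and only if $c(e_{i-1})+c(e_i)\equiv 1\pmod 2$. Summing this quantity over all vertices gives
\[
\sum_{i=1}^{n}\bigl(c(e_{i-1})+c(e_i)\bigr)=2\sum_{i=1}^{n}c(e_i)\equiv 0\pmod 2,
\]
where the middle equality holds because each edge is counted at exactly its two endpoints. On the other hand, reducing the left-hand side modulo $2$ term by term shows it is congruent to the number of bichromatic vertices. Hence that number is even. (Intuitively, this is just the familiar fact that travelling once around the cycle one must switch colors an even number of times in order to return to the starting edge.)

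Finally, writing $2t$ for the number of bichromatic vertices, the number of monochromatic vertices equals $n-2t$, which is odd precisely because $n$ is odd, completing the argument. I do not expect a genuine obstacle here: the only point requiring care is the bookkeeping that every edge contributes to exactly two vertex-sums, which is what forces the global total to be even.
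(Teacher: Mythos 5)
Your proof is correct, and it takes a genuinely different route from the paper's. The paper argues by induction on the (odd) length of the cycle: it first locates a vertex whose two incident edges share a color (which must exist since $n$ is odd), deletes the two neighbouring vertices to pass to a colored $C_{n-2}$, and checks by a short case analysis that the count of monochromatic vertices changes by $0$ or $2$, so its parity is preserved; the base case $C_3$ anchors the induction. You instead give a direct double-counting argument: each edge contributes its color to exactly two vertex-sums, so $\sum_i\bigl(c(e_{i-1})+c(e_i)\bigr)$ is even, while modulo $2$ it equals the number of bichromatic vertices; hence that number is even and the monochromatic count $n-2t$ is odd. Your argument is shorter, avoids both the induction and the case analysis, and makes transparent the underlying invariant (the number of color changes around any closed walk in two colors is even), which the inductive proof only verifies implicitly; the paper's proof, on the other hand, is self-contained at the level of elementary cycle manipulations and does not require the algebraic identification of $S$ with $\mathbb Z/2\mathbb Z$. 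Both establish the proposition in full.
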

\begin{proof}
  Let $n=2k+1$. The proof works by induction on $k$. If $k=1$, the statement is
  clear. Let $k\ge 2$ be an odd integer and suppose that the statement
  holds for $k-1$. First note that, since $n$ is
  odd, there exists at least a vertex $v$ in $C_n$  whose adjacent edges have the
  same color. Let $V=\{x_1,\dots,x_n\}$ be the set of vertices of
  $C_n$. We can suppose that $v=x_1$ and that  $1$ is the color of its
  adjacent edges. Consider the cycle $C_{n-2}$ of vertices $\{x_3,\dots,x_n\}$ obtained by $C_n$
  adding the edge $\{x_n,x_3\}$ and color this cycle with $1$. By
  hypothesis on $x_1$ we know that $\{x_1,x_n\}$ and $\{x_1,x_2\}$ are
  both colored with $1$. So, if $r$ is the number of vertices of
  $C_n$ whose adjacent edges have the
  same color and $s$ is the number of vertices of $C_{n-2}$
  whose adjacent edges have the same color, we see that either $r=s+2$
  or $r=s$. Indeed:
\begin{itemize}
	\item if $c(\{x_2,x_3\})=2$ and $c(\{x_3,x_4\})=2$, then $r=s+2$;
	\item if $c(\{x_2,x_3\})=2$ and $c(\{x_3,x_4\})=1$, then $r=s$;
	\item if $c(\{x_2,x_3\})=1$, then $r=s+2$.
\end{itemize}
By applying the inductive hypothesis on $C_{n-2}$ we see that $s$ is odd and so $r$ is odd too.
\end{proof}

\begin{thm}
  \label{T:3}
  Let $n\ge 3$ be an odd integer such that $n\equiv 3\mod 4$. Then $\widetilde{\chi}(K_n)=3$. 
\end{thm}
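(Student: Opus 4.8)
The plan is to establish the two inequalities $\widetilde{\chi}(K_n)\ge 3$ and $\widetilde{\chi}(K_n)\le 3$ separately. Throughout I write $n=4k+3$, so that every vertex of $K_n$ has even degree $n-1=4k+2$.

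For the lower bound I would show that no $2$-homogeneous edge-coloring can exist. If $c\colon E\to\{1,2\}$ were one, then since $|E(x)|=4k+2=2(2k+1)$ we get $q_x=2k+1$ and $r_x=0$ in Definition~\ref{D:2}, so every vertex is incident to exactly $2k+1$ edges of each color. The color-$1$ subgraph $G_1\subseteq K_n$ would then be $(2k+1)$-regular on $4k+3$ vertices, and its degree sum $(2k+1)(4k+3)$ would be odd, contradicting the handshake lemma; hence $\widetilde{\chi}(K_n)\ge 3$. I expect this is precisely where Proposition~\ref{P:1} is meant to enter, giving the same conclusion along the lines set up by the paper: decompose $K_n$ into the $t=2k+1$ Hamiltonian cycles furnished by \cite{AG}, and observe that by Proposition~\ref{P:1} each cycle carries an odd number of monochromatic vertices, so the total over all cycles is odd (as $t$ is odd); on the other hand, a short count shows each vertex is monochromatic in an even number of the cycles (if $a_v$ cycles have both of $v$'s edges colored $1$, then the balance $2k+1$ per color forces the number of mixed cycles at $v$ to be odd, so the monochromatic count is $2a_v$), whence the total is even — a contradiction.

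For the upper bound I would construct a $3$-homogeneous coloring from the same Hamiltonian decomposition $H_1,\dots,H_t$, $t=2k+1$. Coloring a whole cycle with one color adds $2$ to that color at every vertex, so when $3\mid t$ (that is, $k\equiv 1\pmod 3$) I would color $t/3$ cycles with each color and every vertex would receive $2t/3$ edges of each color, which is homogeneous and finishes that case. When $t\equiv 1$ or $2\pmod 3$ I would color $\lfloor t/3\rfloor$ cycles with each color and then distribute the remaining one, respectively two, cycles so that the $2$, respectively $4$, edges they contribute at each vertex complete the balanced triple for degree $4k+2$, namely the local pattern $(1,1,0)$, respectively $(2,1,1)$, added to the baseline $(2s,2s,2s)$.

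The hard part is exactly this last step, and I anticipate it is the main obstacle: the leftover cycles are odd, so by Proposition~\ref{P:1} they cannot be $2$-colored with every vertex balanced, and a monochromatic vertex necessarily appears, producing a local defect $(2,0,0)$ in place of $(1,1,0)$. My plan is to arrange exactly one such defect per leftover cycle (the alternating coloring does this) and then repair the bounded number of defect vertices by local recoloring along an alternating path, spending color $3$ only at those places so that the shift of degrees cancels at the path's interior and is absorbed at a compensating vertex; a short case analysis according to $t\bmod 3$ (equivalently $n\bmod 12$) should then complete the proof, with the delicate point being to check that the repaired coloring is still balanced at the endpoints of each exchange path. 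A cleaner alternative that I would try first is a direct circulant coloring in the spirit of the second construction in Theorem~\ref{T:2}: partition the difference set $\{1,\dots,2k+1\}$ into three parts and color $\{x_i,x_j\}$ by the class of $|i-j|\bmod n$, then perturb a single difference class to inject the odd per-vertex counts that the balanced triple requires.
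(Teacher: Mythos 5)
Your lower bound is complete and correct; in fact the handshake argument (a $2$-homogeneous coloring would make the color-$1$ subgraph $(2k+1)$-regular on $4k+3$ vertices, with odd degree sum) is cleaner than the paper's route through Proposition~\ref{P:1}, and your two-way count of monochromatic vertex--cycle incidences is a rigorous rendering of what the paper only gestures at when it concludes that ``there will be a vertex with at least $2k+3$ edges of the same color.'' On the upper bound, the case $3\mid t$ (i.e.\ $n\equiv 7\pmod{12}$) is fine and coincides with the paper. For one leftover cycle ($t\equiv 1\pmod 3$, $n\equiv 3\pmod{12}$) your plan works but is overbuilt: since only the unordered pattern $(1,1,0)$ at each vertex matters, no alternating-path repair or compensating vertex is needed; the proper $3$-edge-coloring $1,2,1,2,\dots,1,2,3$ of the odd leftover cycle already gives every vertex two distinct colors, and this is exactly what the paper does.

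The genuine gap is the case of two leftover cycles ($t\equiv 2\pmod 3$, $n\equiv 11\pmod{12}$), where each vertex must receive the pattern $(2,1,1)$ from the four edges it has in two odd cycles. Your repair scheme is not worked out here, and the difficulty is not a ``bounded number of defect vertices'': if the first leftover cycle is colored so that a typical vertex sees $\{1,2\}$, then since $\{1,2\}$ plus $\{1,2\}$ gives the forbidden pattern $(2,2,0)$, the second cycle must supply an edge of color $3$ at all but two of its vertices, so roughly half of its edges must be color $3$ and its coloring is globally constrained rather than locally repairable. This can be pushed through (color the second cycle with the pattern $3,x,3,x,\dots$ with $x\in\{1,2\}$, placing its single $\{3,3\}$ vertex and the two exceptional vertices of the first cycle compatibly), but you have not done it, and the paper sidesteps the issue entirely in this case by exhibiting a closed-form coloring: $c(\{x_i,x_j\})$ determined by $i+j\bmod 3$ for $i,j<n$, with a separate rule for edges at $x_n$. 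Either complete the two-cycle construction explicitly or switch to such a direct coloring; as written the proof is incomplete for $n\equiv 11\pmod{12}$.
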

\begin{proof}
  Let $n=4k+3$ and suppose that $\widetilde{\chi}(K_n)=2$. Take $S=\{1,2\}$. Then
  each vertex has $2k+1$ edges colored with $1$ and $2k+1$ colored
  with $2$. By \cite[Theorem 1.2]{AG} we see that there exist $2k+1$ cycles of length $n$ that decompose $K_n$. Moreover by Proposition \ref{P:1} in any of these
cycles there are precisely an odd number of vertices whose adjacent edges have the
  same color. This, together with the fact that the cycles of length $n$
  decomposing $K_n$ are in odd number, gives a contradiction with the
  hypothesis that  $\widetilde{\chi}(K_n)=2$, because there will be a vertex with at
  least $2k+3$ edges with the same color.

	Now we show that $\widetilde{\chi}(K_n)=3$. Let $S=\{1,2,3\}$ be a set of colors.

	If $3\mid n-1$, then $K_{n}$ is decomposed by $\tfrac{n-1}{2}$ cycles of length $n$ and we can color all the edges in $\tfrac{n-1}{6}$ cycles with $1$, all the edges in $\tfrac{n-1}{6}$ cycles with $2$ and all the edges in $\tfrac{n-1}{6}$ cycles with $3$. Since all the vertices of $K_{n}$ have degree $n-1$, this shows that $\widetilde{\chi}(K_n)=3$ when $3\mid n-1$.

	In the case that $3\mid n-1$, i.e. $n=12h+7$ for some $h\in \mathbb N$, another possible coloring is the following mapping $c\colon E\rightarrow S$:
	\[
		c(\{x_{i},x_{j}\})=
		\begin{cases}
			1 &\text{ if }j\equiv i-1,\dots,i-2h-1,i+1,\dots,i+2h+1\mod n\\
			2 &\text{ if }j\equiv i-2h-2,\dots,i-4h-2,i+2h+2,\dots,i+4h+2\mod n\\
			3 &\text{ if }j\equiv i-4h-3,\dots,i-6h-3,i+4h+3,\dots,i+6h+3\mod n.
		\end{cases}
	\]
	for any $i,j\in \{1,\dots,n\}$, with $i\ne j$.

	Let $n\equiv 2 \mod 3$ and let $V=\{x_1,\dots,x_n\}$. We define a mapping $c\colon E\rightarrow S$ in the following way:
	\begin{itemize}
		\item if $i,j<n$
		\[
			c(\{x_i,x_j\})=
			\begin{cases}
				1 &\text{ if }i+j\equiv 2 \mod 3\\
				2 &\text{ if }i+j\equiv 1 \mod 3\\
				3 &\text{ if }i+j\equiv 0 \mod 3				
			\end{cases}
		\] 
		\item if $i=n$ 
		\[
			c(\{x_i,x_j\})=
			\begin{cases}
				1 &\text{ if }j\equiv 1 \mod 3\\
				2 &\text{ if }j\equiv 2 \mod 3\\
				3 &\text{ if }j\equiv 0 \mod 3				
			\end{cases}
		\] 
		\item if $j=n$ 
		\[
			c(\{x_i,x_j\})=
			\begin{cases}
				1 &\text{ if }i\equiv 1 \mod 3\\
				2 &\text{ if }i\equiv 2 \mod 3\\
				3 &\text{ if }i\equiv 0 \mod 3.				
			\end{cases}
		\]
	\end{itemize}
	An easy computation shows that this gives the statement in the case $n\equiv 2 \mod 3$, i.e. $n=12h+11$ for some $h\in \mathbb N$. So given any vertex $x\in V$ the edges in $E(x)$ can be divided in three subsets, one with $4h+4$ and two with $4h+3$ elements, in such a way that all the edges in the same subset are colored either with $1$ or $2$ or $3$.
	
	Let $n\equiv 0\mod 3$, so that $n=12h+3$ for some $h\in \mathbb N$. By \cite[Theorem 1.2]{AG} we see that there are $6h+1$ cycles of length $n$ that decompose $K_{n}$. To prove the statement it is sufficient to color all the edges in $2h$ cycles with $1$, in other $2h$ cycles with $2$ and other $2h$ cycles with $3$. The edges in the last cycle can be colored alternatively with $1$ and $2$ with the exception of one edge colored with $3$, accordingly to the sequence $1,2,1,2,\dots,1,2,3$. So given any vertex $x\in V$ the edges in $E(x)$ can be divided in three subsets, one with $4h$ and two with $4h+1$ elements, in such a way that all the edges in the same subset are colored either with $1$ or $2$ or $3$.
	
	In this case another possible coloring is the following mapping $c\colon E\rightarrow S$:
	\[
		c(\{x_{i},x_{j}\})=
		\begin{cases}
			1 &\text{ if }i+j\equiv 2\mod 3\\
			2 &\text{ if }i+j\equiv 1\mod 3\\
			3 &\text{ if }i+j\equiv 0\mod 3.
		\end{cases}
	\]
	for any $i,j\in \{1,\dots,n\}$, with $i\ne j$.
\end{proof}

\begin{thm}
	Let $\lambda,n\in \mathbb N$. Then:
	\[
		\widetilde{\chi}(\lambda K_{n})=
		\begin{cases}
			3 & \text{if $\lambda$ is odd and $n\equiv 3 \mod 4$}\\
			2 & \text{otherwise}.
		\end{cases}
	\]
\end{thm}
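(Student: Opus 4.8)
The plan is to follow the three regimes dictated by the statement and to reduce each to a suitable coloring of a single copy $K_n$ produced in Theorems \ref{T:1}, \ref{T:2} and \ref{T:3}. Throughout, $\widetilde{\chi}(\lambda K_n)\ge 2$ is immediate from Definition \ref{D:2}, and every vertex of $\lambda K_n$ has degree $\lambda(n-1)$; the cases $n\le 2$ are degenerate and checked by hand. I would view $\lambda K_n$ as $\lambda$ edge-disjoint copies of $K_n$ and build colorings by prescribing a coloring on each copy.

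First I would dispose of the cases giving value $2$. If $\lambda$ is even, color $\lambda/2$ copies entirely with color $1$ and the other $\lambda/2$ entirely with color $2$; each vertex then sees exactly $\tfrac{\lambda}{2}(n-1)$ edges of each color, a $2$-homogeneous coloring. If $\lambda$ is odd and $n\not\equiv 3\pmod 4$, I would use $\tfrac{\lambda-1}{2}$ monochromatic copies of each color together with one copy colored by the $2$-homogeneous coloring of $K_n$ given by Theorem \ref{T:1} ($n$ even) or Theorem \ref{T:2} ($n\equiv 1\pmod 4$). A direct check of the two color totals at a vertex shows they coincide when $n\equiv 1\pmod 4$ and differ by exactly $1$ when $n$ is even; these are precisely the balanced partitions of the degree $\lambda(n-1)$, which is even in the first case and odd in the second.

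The substantive case is $\lambda$ odd and $n\equiv 3\pmod 4$, where I must establish both bounds. For $\widetilde{\chi}(\lambda K_n)\ge 3$, note that $n$ is odd, so by \cite[Theorem 1.2]{AG} the graph $\lambda K_n$ decomposes into $M=\lambda\tfrac{n-1}{2}$ Hamiltonian cycles of length $n$; since $n\equiv 3\pmod 4$ both $\lambda$ and $\tfrac{n-1}{2}$ are odd, hence $M$ is odd. Supposing a $2$-homogeneous coloring exists, each vertex carries exactly $M$ edges of each color, the degree $\lambda(n-1)$ being even. I would then double count the pairs $(v,C)$ for which the two edges of the vertex $v$ in the cycle $C$ have the same color. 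Summing over cycles and applying Proposition \ref{P:1} gives an odd total, as each of the $M$ (odd-many) odd-length cycles contributes an odd number. On the other hand, fixing $v$ and letting $a,b,c$ count the cycles in which its pair of edges is $\{1,1\}$, $\{2,2\}$, $\{1,2\}$ respectively, the balance $2a+c=2b+c=M$ forces $a=b$, so $v$ is monochromatic in an even number $2a$ of cycles and the total is even — a contradiction.

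For the upper bound $\widetilde{\chi}(\lambda K_n)\le 3$ I would overlay cyclically relabeled copies of the $3$-homogeneous coloring $c_0$ of $K_n$ furnished by Theorem \ref{T:3}. The key observation is that at each vertex $x$ the three color counts of $c_0$ differ by at most $1$ (this being the homogeneous condition for the single copy), so unless $3\mid n-1$ exactly one color is \emph{distinguished} at $x$, namely the unique class of the odd size. A global cyclic relabeling $\rho=(1\,2\,3)$ sends this distinguished color $s(x)$ to $\rho(s(x))$. Coloring copy $\ell$ by $\rho^{t_\ell}\circ c_0$, with shifts $t_\ell\in\{0,1,2\}$ chosen so that each residue occurs $\lfloor\lambda/3\rfloor$ or $\lceil\lambda/3\rceil$ times, makes the multiset $\{\rho^{t_\ell}(s(x))\}_\ell$ balanced at every vertex at once; consequently the three color totals at $x$ differ by at most $1$, which is exactly the required homogeneous partition of $E(x)$. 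When $3\mid n-1$ the single copy is already perfectly balanced and the relabelings are immaterial. I expect this averaging step — forcing simultaneous balance at all vertices even though the distinguished color varies from vertex to vertex — to be the main obstacle, and the cyclic choice of relabelings, which shifts every fixed color through all three classes equally, is what overcomes it.
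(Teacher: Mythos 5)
Your proposal is correct and follows essentially the same route as the paper: the value-$2$ cases are handled by distributing (possibly swapped) copies of the colorings from Theorems \ref{T:1} and \ref{T:2}, the lower bound in the case $\lambda$ odd, $n\equiv 3\pmod 4$ comes from the Hamiltonian-cycle decomposition together with the parity count of Proposition \ref{P:1}, and the upper bound overlays the three cyclic relabelings of the coloring of Theorem \ref{T:3} as evenly as possible over the $\lambda$ copies. Your explicit double count of monochromatic vertex--cycle incidences is a somewhat cleaner rendering of the parity contradiction that the paper only sketches by reference to Theorem \ref{T:3}, but it is the same underlying argument.
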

\begin{proof}
	We will prove the following:
	\[
	\widetilde{\chi}(\lambda K_{n})=
	\begin{cases}
		2 & \text{if $n$ is even}\\
		2 & \text{if $n\equiv 1 \mod 4$}\\
		2 & \text{if $\lambda$ is even and $n\equiv 3 \mod 4$}\\
		3 & \text{if $\lambda$ is odd and $n\equiv 3\mod 4$}.
	\end{cases}
	\]
	\textbf{First case: $\bm n$ even.} Let $S=\{1,2\}$ a set of colors. Then by Theorem \ref{T:1} each copy of $K_{n}$ has a $2$-homogeneous edge-coloring. We can use this coloring for $\lfloor \tfrac{\lambda}{2}\rfloor$ copies of $K_{n}$ and for the remaining $\lceil \tfrac{\lambda}{2}\rceil$ copies of $K_{n}$ the coloring obtained by permuting $1$ and $2$ in the previous one.
	
	\textbf{Second case: $\bm{n \equiv 1 \mod 4}$.} This follows immediately by Theorem \ref{T:2}: each copy of $K_{n}$ has a $2$-homogeneous edge-coloring. We can use this coloring for all the copies of $K_{n}$.
	
	\textbf{Third case: $\bm \lambda$ is even and $\bm{n\equiv 3 \mod 4}$.} Let $S=\{1,2\}$ a set of colors. We can color $\tfrac{\lambda}{2}$ copies of $K_{n}$ with $1$ and $\tfrac{\lambda}{2}$ copies of $K_{n}$ with $2$.
	
	\textbf{Fourth case: $\bm{\lambda}$ is odd and $\bm{n\equiv 3\mod 4}$.} Proceeding as in Theorem \ref{T:3} we see that $\widetilde{\chi}(\lambda K_{n})>2$. We need to prove that $\widetilde{\chi}(\lambda K_{n})=3$. We denote by $S=\{1,2,3\}$ a set of colors and by $c$ be the coloring given in Theorem \ref{T:3}.
	
	If $3\mid n-1$, then we can use $c$ for each copy of $K_{n}$. 
	
	Let either $n\equiv 2\mod 3$ or $n\equiv 0\mod 3$. We consider $c_{{132}}$ the coloring obtained by $c$ and by the permutation of the colors $(1\, 3\, 2)$ and $c_{123}$ the coloring obtained by $c$ and by  the permutation of the colors $(1\, 2\, 3)$. 
	\begin{itemize}
		\item If $\lambda=3m$, for some $m\in \mathbb N$, we color $m$ copies of $K_{n}$ with $c$, $m$ with $c_{132}$ and $m$ with $c_{123}$.
		\item If $\lambda=3m+1$, for some $m\in \mathbb N$, we color $m+1$ copies of $K_{n}$ with $c$, $m$ with $c_{132}$ and $m$ with $c_{123}$.
		\item If $\lambda=3m+2$, for some $m\in \mathbb N$, we color $m+1$ copies of $K_{n}$ with $c$, $m+1$ with $c_{132}$ and $m$ with $c_{123}$.
	\end{itemize}
	   This proves the statement.
\end{proof}

\section{Trees and complete bipartite graphs}

\begin{thm}
	If $G=(V,E)$ is a tree and $|V|\ge 3$, then $\widetilde{\chi}(G)=2$.
\end{thm}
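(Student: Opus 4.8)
The plan is to reduce the whole statement to the construction of a single coloring and then to build one by induction, removing one leaf at a time. Since any homogeneous edge-coloring comes with an integer $m\ge 2$, the bound $\widetilde{\chi}(G)\ge 2$ holds for free; thus the entire task is to prove $\widetilde{\chi}(G)\le 2$, i.e. to produce a coloring $c\colon E\to\{1,2\}$ that is $2$-homogeneous in the sense of Definition \ref{D:2}.

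First I would restate the $2$-homogeneous condition in a workable form. Fix $S=\{1,2\}$ and, for a vertex $x$, write $a_x$ and $b_x$ for the number of edges of $E(x)$ colored $1$ and $2$. Definition \ref{D:2} with $m=2$ asks exactly that $|a_x-b_x|\le 1$: when $|E(x)|$ is even this forces $a_x=b_x=q_x$, and when $|E(x)|$ is odd it forces $\{a_x,b_x\}=\{q_x,q_x+1\}$. So a $2$-homogeneous coloring of $G$ is precisely a $2$-coloring of the edges whose two color-counts differ by at most $1$ at every vertex; as noted in the remarks after Definition \ref{D:2}, a color may be absent at a vertex, so a pendant vertex is automatically admissible.

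The main step is an induction on the number of edges of the tree. For the base case, a tree with a single edge is colored with $1$, which is trivially admissible at both endpoints. For the inductive step, let $T$ have at least two edges, choose a leaf $v$ with neighbor $u$, and apply the inductive hypothesis to $T-v$ (still connected and acyclic, with one fewer edge) to obtain an admissible coloring there. Every vertex other than $u$ has the same incident edges in $T$ and in $T-v$, so it stays balanced; at $u$ the counts satisfy $|a_u-b_u|\le 1$ with $a_u+b_u=\deg_T(u)-1$. I would then color the restored edge $uv$ with whichever color is in the minority at $u$, breaking a tie arbitrarily: this equalizes the two counts at $u$ when $\deg_T(u)-1$ is odd and keeps them within $1$ when it is even, so $u$ is balanced within $\deg_T(u)$, while the new leaf $v$ carries a single edge and is trivially balanced.

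There is no serious obstacle here; the only points requiring care are the exact translation of Definition \ref{D:2} into the ``differ by at most one'' condition, and the verification that the minority-color extension restores balance at $u$ in both parity cases. Combining the resulting coloring with the trivial lower bound $\widetilde{\chi}(G)\ge 2$ yields $\widetilde{\chi}(G)=2$.
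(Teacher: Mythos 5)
Your proposal is correct and follows essentially the same route as the paper: an induction that deletes a pendant vertex, applies the inductive hypothesis to the smaller tree, and extends the coloring across the restored edge (you merely make explicit, via the minority-color rule at $u$, the extension step the paper dismisses as easy because $d(x)=1$). No gaps.
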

\begin{proof}
	Let $|V|=n$. We proceed by induction on $n$. If $n=3$, then $G$ is an open path and so $\widetilde{\chi}(G)=2$. Now let the statement hold for a tree with $n-1$ vertices. Let $x\in V$ be a pendant vertex and let $G'=G-x$. Then $G'$ is a tree with $n-1$ vertices and by induction $\widetilde{\chi}(G')=2$. Considered a $2$-homogeneous edge-coloring of $G'$, it is easy to get a $2$-homogeneous edge-coloring of $G$, because $d(x)=1$. 
\end{proof}

\begin{thm} \label{T:4}
	Given $m,n\in \mathbb N$, $\widetilde{\chi}(K_{n,m})=2$. 
\end{thm}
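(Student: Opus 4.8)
The plan is to exhibit an explicit $2$-homogeneous edge-coloring, exactly in the spirit of the coloring used for $K_n$ with $n$ even in Theorem \ref{T:1}. Since Definition \ref{D:2} forces any homogeneous edge-coloring to use at least $m=2$ colors, we have $\widetilde{\chi}(G)\ge 2$ for every $G$, so producing one such coloring will immediately give $\widetilde{\chi}(K_{n,m})=2$. Write the two parts of $K_{n,m}$ as $A=\{a_1,\dots,a_n\}$ and $B=\{b_1,\dots,b_m\}$, so that every vertex $a_i$ has degree $m$, every vertex $b_j$ has degree $n$, and the edge set is $\{\{a_i,b_j\}:1\le i\le n,\ 1\le j\le m\}$. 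Take $S=\{1,2\}$ and define
\[
  c(\{a_i,b_j\})=
  \begin{cases}
    1 & \text{if } i+j \text{ is even},\\
    2 & \text{if } i+j \text{ is odd.}
  \end{cases}
\]

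Next I would verify that $c$ is homogeneous at every vertex. Fixing $a_i\in A$, its incident edges are $\{a_i,b_j\}$ for $j=1,\dots,m$, and the color of such an edge is determined solely by whether $j$ has the same parity as $i$. The number of indices $j\in\{1,\dots,m\}$ of a prescribed parity is either $\lfloor m/2\rfloor$ or $\lceil m/2\rceil$, so the two color classes of $E(a_i)$ have cardinalities $\lfloor m/2\rfloor$ and $\lceil m/2\rceil$ in some order; these differ by at most $1$, which is exactly the requirement of Definition \ref{D:2} with $q_{a_i}=\lfloor m/2\rfloor$ and $r_{a_i}=m-2\lfloor m/2\rfloor\in\{0,1\}$. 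The identical argument applied to a vertex $b_j\in B$, whose incident edges now run over $i=1,\dots,n$, shows that the two color classes of $E(b_j)$ have cardinalities $\lfloor n/2\rfloor$ and $\lceil n/2\rceil$, again differing by at most $1$.

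Since every vertex of both parts satisfies the homogeneity condition, $c$ is a $2$-homogeneous edge-coloring of $K_{n,m}$, whence $\widetilde{\chi}(K_{n,m})=2$. I do not expect a genuine obstacle here: the only point needing a little care is that when $m$ (respectively $n$) is odd the two color classes at a vertex of $A$ (respectively $B$) are forced to have different sizes, but this remains admissible because Definition \ref{D:2} only demands that the class cardinalities differ by at most $1$. The degenerate cases are also subsumed by the same formula; for instance $n=1$ gives the star $K_{1,m}=S_m$, and the value $\widetilde{\chi}=2$ it produces agrees with the one already recorded in the introductory remarks.
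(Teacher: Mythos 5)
Your proposal is correct and uses exactly the same parity coloring $c(\{a_i,b_j\})=1$ iff $i+j$ is even that the paper gives; the only difference is that you index the two parts separately and verify the class sizes $\lfloor m/2\rfloor,\lceil m/2\rceil$ explicitly, which is actually a cleaner write-up than the paper's (whose formula nominally ranges both indices over $\{1,\dots,n\}$). No gap.
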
	
\begin{proof}
	Given $S=\{1,2\}$, it is sufficient to consider the following $c\colon E\rightarrow S$:
	\[
		c(\{x_i,x_j\})=
		\begin{cases}
			1 &\text{ if $i+j$ is even}\\
			2 &\text{ if $i+j$ is odd},				
		\end{cases}
	\]
	for any $i,j\in \{1,\dots,n\}$, with $i\ne j$.
\end{proof}

\begin{thm}
	Given $\lambda,m,n\in \mathbb N$, $\widetilde{\chi}(\lambda K_{n,m})=2$. 
\end{thm}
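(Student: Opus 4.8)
The plan is to build the coloring of $\lambda K_{n,m}$ out of the explicit $2$-homogeneous edge-coloring $c$ of the simple graph $K_{n,m}$ given in Theorem \ref{T:4}, together with its color-swap $\bar c$ obtained from $c$ by exchanging the colors $1$ and $2$. Since the definition of homogeneous edge-coloring forces $m\ge 2$, we always have $\widetilde{\chi}(\lambda K_{n,m})\ge 2$, so it suffices to exhibit one $2$-homogeneous edge-coloring with $S=\{1,2\}$.

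First I would record, for each vertex $v$ of $K_{n,m}$, the numbers $a_v$ and $b_v$ of edges at $v$ that $c$ colors with $1$ and with $2$ respectively; homogeneity of $c$ gives $|a_v-b_v|\le 1$ (indeed $a_v=b_v$ when $\deg v$ is even, and $|a_v-b_v|=1$ when $\deg v$ is odd). Under $\bar c$ these two counts are exchanged, so $v$ sees $b_v$ edges of color $1$ and $a_v$ edges of color $2$.

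The construction is then to color $\lceil \tfrac{\lambda}{2}\rceil$ of the $\lambda$ parallel copies of $K_{n,m}$ by $c$ and the remaining $\lfloor \tfrac{\lambda}{2}\rfloor$ copies by $\bar c$. Writing $k=\lceil \tfrac{\lambda}{2}\rceil$, at any vertex $v$ the total number of edges of color $1$ minus the total number of color $2$ equals
\[
 \bigl(k a_v+(\lambda-k)b_v\bigr)-\bigl(k b_v+(\lambda-k)a_v\bigr)=(2k-\lambda)(a_v-b_v).
\]
Here $|2k-\lambda|\le 1$ by the choice of $k$ and $|a_v-b_v|\le 1$ by the previous step, so the two color counts at $v$ differ by at most $1$; since their sum is the full degree $\lambda\deg v$, the edges of $E(v)$ split into the two required classes. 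As this holds at every vertex, the coloring is $2$-homogeneous and hence $\widetilde{\chi}(\lambda K_{n,m})=2$.

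The point I expect to need care with is exactly this last balance check: the coloring $c$ need not exhibit the same majority color at every odd-degree vertex, so one cannot simply ``align'' the copies vertex by vertex. What saves the argument is that only the absolute value of the discrepancy matters, and $|(2k-\lambda)(a_v-b_v)|\le 1$ holds uniformly regardless of the individual signs of $a_v-b_v$. The parity of $\lambda$ enters only through $k$: for even $\lambda$ one gets $2k-\lambda=0$ and a perfectly balanced coloring at every vertex, while for odd $\lambda$ the discrepancy is exactly $\pm 1$ at the odd-degree vertices, which is still admissible.
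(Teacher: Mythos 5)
Your proposal is correct and follows essentially the same route as the paper: color $\lceil \tfrac{\lambda}{2}\rceil$ copies of $K_{n,m}$ with the coloring $c$ from Theorem \ref{T:4} and the remaining $\lfloor \tfrac{\lambda}{2}\rfloor$ copies with its color-swap. The only difference is that you spell out the balance computation $(2k-\lambda)(a_v-b_v)$ at each vertex, which the paper leaves implicit.
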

\begin{proof}
	Let $c$ be the coloring of $K_{m,n}$ given in Theorem \ref{T:4} and $c'$ the coloring obtained by $c$ permuting $1$ and $2$. Then the statement follows by considering the following coloring of $\lambda K_{m,n}$: we use the coloring $c$ for $\lceil \tfrac{\lambda}{2}\rceil$ copies of $K_{m,n}$ and $c'$ for the remaining $\lfloor \tfrac{\lambda}{2}\rfloor$ copies of $K_{m,n}$.
\end{proof}

\end{document}